\documentclass[12pt,a4paper]{article}
\usepackage{amsmath,amssymb,amsthm}
\usepackage[english]{babel}
\usepackage[latin1]{inputenc}
\usepackage[T1]{fontenc}
\usepackage{times}
\usepackage{color,longtable}
\usepackage[title]{appendix}
\usepackage[normalem]{ulem}

\newtheorem{theorem}{Theorem}[section]

\theoremstyle{definition}
\newtheorem{definition}[theorem]{Definition}

\def\skipaline{\removelastskip\vskip10pt plus 1pt minus 1pt}
\def\skiphalfaline{\removelastskip\vskip5pt plus 1pt minus 1pt}

\newcommand{\LL}{\mathbb{L}\hskip0.2mm}

\newcommand{\R}{\mathbb{R}\hskip0.2mm}
\newcommand{\N} {\mathbb{N}\hskip0.2mm}

\DeclareMathOperator{\dd}{\mbox{\rm d}}

\newcommand{\J}{[0,2\pi]}
\newcommand{\I}{(0,\infty)}
\newcommand{\eps}{\varepsilon}
\newcommand{\wt}{\widetilde}
\newcommand{\infess}{\mbox{inf\ ess}}

\newcommand{\pruh}{\overline}

\numberwithin{equation}{section}

\begin{document}

\title{\sc  Multiple solutions of singular periodic problems}
\author{Milan Tvrd\'y, Institute of Mathematics, Czech Academy of Sciences\\[4mm]
{\small(based on joint research with {\bf Irena Rach\r{u}nkov\'a} and {\bf Ivo Vrko\v{c}})}}
\date{}
\maketitle

In this contribution we consider the singular periodic problem
\[
    u''+k\,u=g(u)+e(t),\quad u(0)=u(2\pi),\,\,u'(0)=u'(2\pi),
\]
where $k\in\R,$ $e:\J\to\R$ is Lebesgue integrable and $g:\I\mapsto\R$ is continuous,
but can have a singularity at $x=0.$
In particular, I wish to revisit our earlier result regarding the existence of
at least two its solutions and present a proof that is somewhat more transparent
than the original one.

Most of the symbols (e.g., for spaces, norms, etc.) appearing in this paper are generally accepted;
therefore, I will explain their meaning only in several exceptional cases.

\section{Method of lower and upper functions}

Our main tools will be the lower and upper functions associated to the given problem.
We will work with somewhat generalized versions of this concept.

\begin{definition}\label{LUF}
   Let $J$ be a subinterval of $\R$ and let $f$ be a Carath\'{e}odory function on
   $\J\times J.$ Then, a couple of functions $(\sigma,\rho)\in AC\J\times BV\J$
   \ are said to be {\em lower} functions of the problem
   \begin{equation}\label{gen}
      u''= f(t,u),\quad u(0)=u(2\pi),\,\, u'(0)=u'(2\pi)
   \end{equation}
   if\,\footnote{As usual, $\rho^{\mbox{\tiny{sing}}}$ \ stands for the singular part
   of\ $\rho\in BV\J.$}
   \vskip-10mm
   \begin{align*}
        &\sigma(t)\in J \mbox{ \ for a.e. \ } t\in\J,
        \quad \rho^{\mbox{\tiny{sing}}}\mbox{ \ is nondecreasing on \ } \J,
      \\[1mm]
        &\sigma'(t)=\rho(t) \mbox{ \ a.e. on \ } \J,
      \\[1mm]
        &\rho'(t)\ge f(t,\sigma(t)),\quad
         \sigma(0)=\sigma(2\pi),\,\,\rho(0+)\ge\rho(2\pi-).
  \end{align*}

  Similarly, a couple of functions $(\sigma,\rho)\in AC\J\times BV\J$ \ are {\em upper}
  functions of \eqref{gen} \ if \ $\rho^{\mbox{\tiny{sing}}}$ \ is nonincreasing and the
  inequalities above are reversed.
\end{definition}

The above notion of lower and upper functions is justified by the following theorem
which is contained in \cite[Theorems 4.1, 4.2 and 4.3]{RT2}.

\skipaline

\begin{theorem}\label{LUFTh}
   Let $J$ be a subinterval of $\R$ and let $f$ be a Carath\'{e}odory function on $\J\times J.$
   Further, let \ $(\sigma_1,\rho_1)$ and $(\sigma_2,\rho_2)$ \ be respectively lower and upper functions
   of the problem \eqref{gen}. Furthermore, assume that there is $m\in\LL\J$ such that $f(t,x)\ge m(t)$
   for a.e. $t\in\J$ and all $x\in\R$ $($or $f(t,x)\le m(t)$ for a.e. $t\in\J$ and all $x\in\R).$
   Then \eqref{gen} has a~solution \ $u$ \ such that $\|u'\|_{\infty}\le\|m\|_1.$ Moreover, if
   \begin{equation}\label{well}
       \sigma_1(t)\le\sigma_2(t)\quad\mbox{for all\ } t\in\J,
   \end{equation}
   then $\sigma_1(t)\le x(t)\le\sigma_2(t)$ is true for all $t\in\J$ and if \eqref{well} does not
   hold, then there is $t_u\in\J$ such that $\sigma_2(t_u)\le u(t_u)\le\sigma_1(t_u).$
\end{theorem}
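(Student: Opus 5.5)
The plan is the standard truncation and degree argument for lower and upper functions with one-sided bounds, taking account of the generalized (BV) slopes $\rho_i$.

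\emph{Well-ordered case} \eqref{well}. Put $\alpha=\sigma_1$ and $\beta=\sigma_2$. Modify $f$ outside the strip between them:
\[
\tilde f(t,x)=f(t,\alpha(t))-\omega(x-\alpha(t))\ \mbox{ if } x<\alpha(t),\quad \tilde f(t,x)=f(t,\beta(t))+\omega(x-\beta(t))\ \mbox{ if } x>\beta(t),
\]
with $\tilde f=f$ inside, and with $\omega$ bounded and increasing, e.g. $\omega(s)=s/(1+|s|)$. The modified problem $u''-u=\tilde f(t,u)-u$ then becomes a fixed-point problem for a completely continuous operator on $C^1\J$, via the Green function of $u''-u$. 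Boundedness of $\tilde f$ by an integrable function, together with Schauder's theorem, gives a solution. The key step is to show $\alpha\le u\le\beta$. Suppose $v=u-\alpha$ has a negative minimum at $t_0$; by periodicity (using $\sigma(0)=\sigma(2\pi)$ and $\rho(0+)\ge\rho(2\pi-)$) we may treat $t_0$ as interior, or handle $t_0=0$ by a shift. Near $t_0$ we have
\[
v''\ \text{(as a measure)} = u''-d\rho_1 \le \tilde f(t,u)-f(t,\alpha)-d\rho_1^{\rm sing} < 0,
\]
because $\rho_1^{\rm sing}$ is nondecreasing and $\omega(u-\alpha)<0$. Hence $v'$ is strictly decreasing across $t_0$, which contradicts a minimum. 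This jump and singular-part bookkeeping is the delicate point. The upper function is handled symmetrically. Inside the strip $u$ solves \eqref{gen}. For the derivative bound, integrate $u''=f\ge m$: $u'$ has a zero $\tau$ by periodicity. For $t>\tau$, $u'(t)\ge-\|m^-\|$-type terms follow, and using $\int_0^{2\pi}u''=0$ one obtains $\|u'\|_\infty\le\|m\|_1$; the bound $|u'(t)|\le\int|u''|$ is split into positive and negative parts of equal mass.

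\emph{Non-ordered case.} This is the main obstacle, and I would follow the degree approach of \cite{RT2}. Since $f\ge m$, any solution satisfies $\|u'\|_\infty\le\|m\|_1$. If a solution $u$ also satisfies $u>\sigma_1$ somewhere and $u<\sigma_2$ elsewhere in a suitable sense, we get the point $t_u$. The argument proceeds as follows. First, the set $\Omega=\{u: \|u'\|_\infty<\|m\|_1+1,\ \exists t:\ \sigma_2(t)<u(t)<\sigma_1(t)\}$ is open. Second, $f$ is truncated in $x$ far away, at levels $\pm R$ where $R$ exceeds the sup norms of $\sigma_i$ plus $2\pi(\|m\|_1+1)$. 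The lower function then yields a well-ordered pair with a large constant upper function, and the upper function yields a well-ordered pair with a large negative constant lower function; the constants are valid after the truncation. Third, the strict versions of the first part give Leray--Schauder degree $1$ on the sets between these ordered pairs. Their union together with $\Omega$ is a large ball, which has degree $0$ by the one-sided bound $f\ge m$: the term $\int f$ cannot vanish at very large constants after the truncation, so a homotopy to $u''=\int\!f$ shows this. Excision then gives degree $-1\neq0$ on $\Omega$, so a solution exists there.

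Non-strict inequalities would be handled by approximating, e.g. by replacing $f$ with $f\pm\eps$ near the $\sigma_i$ or by perturbing $\sigma_i$, and passing to the limit using the a priori $C^1$ bound and Arzel\`a--Ascoli. This limiting step preserves the closed condition $\sigma_2(t_u)\le u(t_u)\le\sigma_1(t_u)$.
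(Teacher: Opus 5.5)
The paper gives no proof of this theorem; it only refers to the literature, where the method is the degree argument you propose. Your well-ordered part is the standard truncation and maximum-principle argument and is fine apart from slips. For $x<\alpha(t)$ the truncation must be $f(t,\alpha(t))+\omega(x-\alpha(t))$, which is what you actually use later. Also, $\tilde f(t,u)-u$ is unbounded, so you should subtract the projection of $u$ onto $[\alpha(t),\beta(t)]$ rather than $u$ itself. The derivative bound is correct. The problem is in the non-ordered part.

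\textbf{Gap: the covering claim.} You assert that the large ball $B$ is, up to boundaries, the union of
\[
\Omega_1=\{\sigma_1<u<C_+\},\quad \Omega_2=\{C_-<u<\sigma_2\},\quad \Omega=\{u:\ \sigma_2(t)<u(t)<\sigma_1(t)\ \mbox{for some}\ t\}.
\]
\begin{itemize}
\item This is true when $\sigma_2\le\sigma_1$ on all of $[0,2\pi]$. Then $\{u<\sigma_2\}$ and $\{u>\sigma_1\}$ are disjoint open sets, so if they cover the circle one of them is everything.
\item The theorem only assumes that $\sigma_1\le\sigma_2$ fails, so $\sigma_1-\sigma_2$ may change sign, and then the covering fails.
\item For instance, take a function that is $>\sigma_1$ on $\{\sigma_2\le\sigma_1\}$ but dips below $\sigma_1$ somewhere in $\{\sigma_1<\sigma_2\}$. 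Or take one that lies above $\sigma_1$ on one component of $\{\sigma_2\le\sigma_1\}$ and below $\sigma_2$ on another. Such a function lies in none of $\overline{\Omega_1}$, $\overline{\Omega_2}$, $\overline{\Omega}$.
\item Nothing in your argument rules out solutions of the truncated problem of this kind.
\end{itemize}
Excision therefore only gives nonzero degree on $B\setminus(\overline{\Omega_1}\cup\overline{\Omega_2})$. That yields a solution with $u(t_1)<\sigma_1(t_1)$ and $u(t_2)>\sigma_2(t_2)$ for some $t_1,t_2$. This implies a point $t_u$ with $\sigma_2(t_u)\le u(t_u)\le\sigma_1(t_u)$ only in the reversed-order situation. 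Your phrase ``in a suitable sense'' hides exactly this missing step.

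\textbf{Secondary problems.}
\begin{itemize}
\item The crossing points also defeat your strictification. Where $\sigma_1(t)=\sigma_2(t)$, you would have to lower $f$ near $\sigma_1$ and raise it near $\sigma_2$ at the same point $(t,x)$.
\item Your degree count is inconsistent. If after truncation $C_+$ is an upper function and $C_-$ a lower function, then $B$ is itself a well-ordered region. Its degree is $1$, not $0$. With $0$ you would get $-2$ for $\Omega$, not $-1$.
\end{itemize}

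\textbf{What your argument does establish.} It proves the non-ordered assertion when $\sigma_2\le\sigma_1$ on $[0,2\pi]$. That covers the only use in the paper: Step 2, where $\sigma_2=A_0<\sigma_1-\nu$. For genuinely crossing $\sigma_1,\sigma_2$, an additional idea is needed that you do not supply.
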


\section{Main result}

Now, let us go back to the problem of our main interest, namely
\begin{equation}\label{P}
   u''+k\,u=g(u)+e(t),\,\, u(0)=u(2\pi),\,\, u'(0)=u'(2\pi).
\end{equation}

In some older papers, cf. e.g. \cite[Theorem 3.3]{RTV2}, we presented related existence results.
Next theorem ensuring the existence of at least two solutions to \ \eqref{P} \ has been
presented first in \cite{RTV2}, cf. Theorem 4.3 therein.

\begin{theorem}\label{Main}
   Assume:
   $g\in C(0,\infty),$ \ $k\in\R,$ \ $e\in L[0,2\pi]$ \ and
   \footnote{As usual, $\overline{e}$ stands for the integral mean value of $e,$ i.e.
   \ $\overline{e}=\frac{1}{2\pi}\int_0^{2\pi}e(s)\,\dd s.$}
   \begin{align}\label{1}
     &\infess_{t\in\J}\,e(t)>-\infty,
    \\[1mm]\label{2}
     &\lim_{x\to 0+}\,\int_x^1g(s)\,\dd s=+\infty,
    \\[1mm]\label{3}
     &\liminf_{x\to 0+}\,g(x)>-\infty,
    \\\label{4}
     &g(x)-k\,x<-\pruh{e} \mbox{ \ for \ } x\,{\in}\,[A,B\,],
          \mbox{\ where \ } A>0, \,\,
           B-A\ge\frac{\pi}{3}\,\|e-\overline{e}\|_1,
    \\[1mm]\label{5}
     &\mbox{the problem \ \eqref{P} \ has upper functions\ } (\sigma_2,\rho_2)
     \mbox{ \ such that\ } \ \sigma_2>B \mbox{ \ on \ } \J.
   \end{align}

   Then the problem \ \eqref{P} \ has at least two positive solutions.
\end{theorem}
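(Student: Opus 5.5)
Rewrite \eqref{P} as $u''=f(t,u)$ with $f(t,x)=g(x)-k\,x+e(t)$ on $J=\I$. The plan is to obtain two solutions from two applications of Theorem~\ref{LUFTh}: one lying between a constant \emph{lower} function placed above the strip where $g-kx<-\pruh e$ and the given upper function $\sigma_2$, and one obtained from a "non-well-ordered'' pair consisting of an upper function living in $[A,B]$ and a lower function near zero. Condition \eqref{4} supplies the upper function in $[A,B]$; conditions \eqref{2}, \eqref{3} supply a lower function close to $0$ and keep solutions away from the singularity. The two solutions are then told apart by their location relative to $[A,B]$.

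\emph{Step 1: an upper function inside $[A,B]$.} Let $w$ be the $2\pi$-periodic solution of $w''=e(t)-\pruh e$ with $\int_0^{2\pi}w=0$. A standard estimate for this Green function gives $\max w-\min w\le \frac{\pi}{3}\|e-\pruh e\|_1\le B-A$, and this is exactly where the constant in \eqref{4} comes from. Put $\sigma_3=c+w$ with $c$ chosen so that $\sigma_3([0,2\pi])\subset[A,B]$. Then $\sigma_3''=e-\pruh e> g(\sigma_3)-k\sigma_3+e$ by \eqref{4}, so $\sigma_3$ (with $\rho_3=\sigma_3'$) is an upper function, and the inequality is strict.

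\emph{Step 2: a lower function near $0$ and truncation.} Using \eqref{3}, choose $\eps\in(0,A)$ with $g$ bounded below on $(0,\eps]$. To get $f\ge m$, together with a priori bounds, I will truncate $f$ outside a suitable interval $[\delta,R]$, where $R>\max\sigma_2$ and $\delta$ is small. With $f$ bounded below by an $L^1$ function, the argument of Theorem~\ref{LUFTh} gives $\|u'\|_\infty\le\|m\|_1$. The standard energy argument then uses \eqref{2}: multiply by $u'$ and integrate from a point where $u$ is large to a point where $u$ is minimal. Together with the bound on $u'$, this yields a uniform $\delta_0>0$ with $u\ge\delta_0$ for every solution of the truncated problem lying above some level. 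Hence the truncation near $0$ is inactive. A lower function $\sigma_1$ near $0$ is a constant $\sigma_1\equiv\eta$ small with $g(\eta)-k\eta+e(t)\le0$. That fails in general, since $g$ may be positive near $0$, so instead I will use \eqref{2} differently: take $\sigma_1$ with $\sigma_1<\sigma_3$ somewhere.

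\emph{Step 3: the two solutions.} Here $\sigma_4\equiv B'$ with $B'$ slightly above $B$ is a lower function. Indeed, by continuity $g-kx<-\pruh e$ on $[B,B']$, and $\rho^{sing}$ conditions are trivial for constants, so the required inequality needs $e(t)-\pruh e\ge0$. That fails pointwise, so I will instead use $\sigma_4=B'+w-\min w\ge B$. This gives $\sigma_4''=e-\pruh e\ge g(\sigma_4)-k\sigma_4+e$, but that is an upper function again. The roles therefore have to be reconsidered. The correct setup is that $\sigma_3$ (a strict upper function in $[A,B]$) together with the given upper function $\sigma_2>B$ each produce solutions against suitable lower functions. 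First, a lower function below $\sigma_3$ exists near $0$ via \eqref{1}--\eqref{3}, and pairing it with $\sigma_3$ gives $u_1\le\sigma_3\le B$ somewhere, by Theorem~\ref{LUFTh}. Second, pairing a lower function above $\sigma_3$ with $\sigma_2$ gives $u_2$ with $u_2(t_u)\ge\sigma_3(t_u)$ in the non-ordered case. Strictness of the inequality in Step 1 shows that $\sigma_3$ itself is not a solution, and a degree/strict-lower-upper argument then separates $u_1\neq u_2$.

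The main obstacle is this final separation. Theorem~\ref{LUFTh} only gives existence with location information, so I expect to need either a Leray--Schauder degree computation on the open sets $\{u:\ u<\sigma_3 \text{ somewhere}\}$ and $\{u:\ \sigma_3<u<\sigma_2\}$, or a careful choice of truncations making the two solution sets disjoint. I would try the degree approach first, using strictness from \eqref{4}.
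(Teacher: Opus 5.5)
Your proposal has a genuine gap, and it starts with a sign error in Step~1. Under Definition~\ref{LUF}, a lower function satisfies $\rho'\ge f(t,\sigma)$, i.e.\ $\sigma''\ge g(\sigma)-k\sigma+e(t)$. Your $\sigma_3=c+w\in[A,B]$ satisfies $\sigma_3''=e-\pruh e>g(\sigma_3)-k\sigma_3+e$ by \eqref{4}, so it is a \emph{lower} function, not an upper one. Because of this mislabelling you treat $\sigma_3$ and the given $\sigma_2$ as objects of the same kind. You then look for lower functions ``near $0$'' and ``above $\sigma_3$'', and neither pairing you propose is available.

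A lower function lying near $0$ does not exist in general. Integrating $\rho'\ge f(t,\sigma)$ over $\J$ and using that $\rho^{\mathrm{sing}}$ is nondecreasing gives $\int_0^{2\pi}f(t,\sigma(t))\,\dd t\le\rho(2\pi-)-\rho(0+)\le0$. This is impossible if, say, $g(x)\to+\infty$ as $x\to0+$ and $\sigma$ is small on $\J$. In any case, pairing it with $\sigma_3$, itself a lower function, gives nothing from Theorem~\ref{LUFTh}. What \eqref{1}--\eqref{3} really provide is a small constant \emph{upper} function $A_0$ with $g(A_0)-kA_0+\infess e\ge0$. The very inequality that made your constant $\eta$ fail as a lower function makes it an upper function. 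The ``lower function above $\sigma_3$'' for the pairing with $\sigma_2$ is never constructed; your $B'$ attempts again yield lower-type inequalities that you label upper. The separation $u_1\ne u_2$ is left to an unspecified degree computation. As written, the proposal does not rigorously produce either solution.

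Once the roles are corrected, no degree theory is needed; this is the paper's route. Take $\sigma_1:=\sigma_3$ as the lower function. Since $\sigma_1\le B<\sigma_2$, the pair is well ordered, and Theorem~\ref{LUFTh} (for a problem truncated so that $f\ge m$) gives $u$ with $\sigma_1\le u\le\sigma_2$. By continuity of $g$ and the \emph{strict} inequality in \eqref{4}, $\sigma_1-\nu$ is still a lower function for small $\nu>0$. Pair it with the constant upper function $A_0<A-\nu$ in an auxiliary problem where $f$ is modified below some $\eps^*<A_0$ and above $\sigma_2(t)$. This pair is not well ordered, so Theorem~\ref{LUFTh} gives $v$ with $A_0\le v(t_v)\le\sigma_1(t_v)-\nu$. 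Hence $v\ne u$ automatically, and this is the only role strictness plays.

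It remains to show the modifications are inactive. First, $v\le\sigma_2$ follows from a maximum-principle argument at a positive maximum of $v-\sigma_2$. It uses that the modified right-hand side exceeds $f(t,\sigma_2(t))\ge\rho_2'(t)$ for $x>\sigma_2(t)$. Your truncation at a constant level $R$ would need its own upper a priori bound, which you do not address. Second, $v\ge\eps^*$ follows from exactly the energy argument you sketched. Multiply by $v'$, integrate from a minimum point to a maximum point, use $\|v'\|_\infty\le\|m\|_1$ and $\max v\ge A_0$, and choose $\eps^*$ via \eqref{2}.
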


My aim now is to present the proof of Theorem \ref{Main} in a form somehow more transparent
than that we did in \cite{RTV2}. Of course, as the available space is limited, I do have
to be a bit brief, after all.

\proof \ {\sc Step\,1}. \ Let \ $\gamma(t,s)$ \ be the Green's function for
\  $v''=0,\,\,v(0)=v(2\pi)=0,$
\[
   c_0=-\frac 1{2\pi}
   \int_0^{2\pi}\hskip-2mm\left(
                 \int_0^{2\pi}\hskip-3mm\gamma(t,s)\,(e(s)-\pruh{e})\,\dd s\hskip-1mm\right)
   \mbox{ \ and \ }
   \sigma_0(t)=c_0\,{+}\hskip-1mm\int_0^{2\pi}\hskip-3mm\gamma(t,s)\,(e(s)-\pruh{e})\,\dd s
   \mbox{ \ on \ } \J.
\]
Then \ $\sigma_0''(t)\,{=}\,e(t){-}\pruh{e}$ \ a.e. on $\J$,
\ $\sigma_0(0)\,{=}\,\sigma_0(2\pi)\,{=}\,c_{\,0}$, \ $\sigma_0'(0)\,{=}\,\sigma'_0(2\pi)$
\ and \ ${\pruh{\sigma}_0\,{=}\,0}$. Hence,\vskip-2mm
\[
    \|\sigma_0\|_\infty\le\sqrt{\frac{\pi}{6}}\,\|\sigma'_0\|_2,
\]\vskip-2mm\noindent
due to the Sobolev inequality. Consequently,
\[
   \|\sigma'_0\|_2^2\,{=}\left|\hskip-0.5mm
                              \int_0^{2\pi}\hskip-2mm\sigma''_0(t)\sigma_0(t)\,\dd t\right|
   {=}\left|\hskip-0.5mm\int_0^{2\pi}\hskip-2mm(e(t){-}\pruh{e})\,\sigma_0(t)\,\dd t\right|
   {\le}\|e-\pruh{e}\|_1\,\|\sigma_0\|_\infty
   {\le}\|e-\pruh{e}\|_1\,\sqrt{\frac{\pi}{6}}\,\|\sigma'_0\|_2,
\]\vskip-2mm\noindent
i.e. $\displaystyle \|\sigma'_0\|_2\le\sqrt{\frac{\pi}{6}}\,\|e-\pruh{e}\|_1.$
Now, using Sobolev inequality once more, we reach at\vskip-1mm
\begin{equation}\label{sob}
    \|\sigma_0\|_\infty\le\frac{\pi}{6}\,\|e-\pruh{e}\|_1.
\end{equation}\vskip-1mm
\noindent
Let \ $\displaystyle
\sigma_1(t)\,{=}\,A\,{+}\,\frac{\pi}{6}\,\|e{-}\pruh{e}\|_1\,{+}\,\sigma_0(t)$ for $t\in\J.$
Then $\sigma''_1(t)\,{=}\,e(t)\,{-}\,\pruh{e}$ \ a.e.,\ $\sigma_1(0)\,{=}\,\sigma_1(2\pi),$
$\sigma'_1(0)\,{=}\,\sigma'_1(2\pi)$ and, due to \eqref{4} and \eqref{sob},
$\sigma_1(t)\in [A,B]$ \ for $t\in\J$. \ Moreover, by \eqref{4},
\[
   \sigma''_1(t)+k\,\sigma_1(t)>e(t)-\pruh{e}+g(\sigma_1(t))\,{+}\,\pruh{e}
                              =e(t)+g(\sigma(t)) \quad\mbox{a.e. on \ } \J,
\]
and \ $(\sigma_1,\sigma'_1)$ \ are lower functions of \eqref{P}. In addition,
it is easy to see that there is a \ $\nu_0\in(0,A)$ \ such that \ $(\sigma_1-\nu,\sigma'_1)$
\ are lower functions of \ \eqref{P} \ for each $\nu\in[0,\nu_0],$ as well.

\skiphalfaline

Having in mind Theorem \ref{LUFTh}, we conclude that our problem \ \eqref{P} \ has
a solution \ $u$ \ such that \ $u(t)\in[\sigma_1(t),\sigma_2(t)]\subset[A,\sigma_2(t)]$
\ for \ $t\in\J.$

\skipaline

\noindent{\sc Step 2.} \quad
By \eqref{2} and \eqref{3}, there is $A_0\in (0,A-\nu)$ such that
\[
   g(A_0)-k\,A_0+e(t)\ge g(A_0)-k\,A_0+\infess_{t\in\J}\,e(t)\ge 0
   \quad\mbox{for a.e.\ } t\in\J.
\]
This means that $A_0$ is a constant upper function of \eqref{P}. Further, put
\[
    R\,{=}\,\|\sigma_2\|_\infty \quad\mbox{and}\quad
    m(t)\,{=}\,e(t)\,{+}\,\min\{0,\inf_{x\in (0,R]}\,(g(x)-k\,x\}
    \mbox{ \ for a.e. \ } t\in\J.
\]\vskip-3mm\noindent
Clearly, $m\in\LL\J.$ Denote\vskip-6mm
\[
   K^*=\|m\|_1\,\|e\|_1+\int_{A_0}^R|g(x)-k\,x|\,\dd x.
\]
Due to \eqref{2}, we can choose $\eps^*\in (0,A_0)$ in such a~way that
\ $g(\eps^*)-k\,\eps^*>0$ \ and
\begin{equation}\label{eps*}
    \int_{\eps^*}^{A_0}(g(x)-k\,x)\,\dd x>K^*.
\end{equation}
Now, put\vskip-5mm
\begin{align*}
  \wt{g}(x)
   &=\begin{cases}
            0 &\quad\mbox{if \ } x<0,
          \\
            g(\eps^*)\,\frac x{\eps^*}-k\,x &\quad\mbox{if \ } x\in(0,\eps^*),
         \\[1mm]
            g(x)-k\,x   &\quad\mbox{if \ } x\in[\eps^*,\sigma_2(t)]
         \end{cases}
   \\\noalign{\mbox{and}}
   \wt{f}(t,x)
   &=e(t)+\begin{cases}
           \wt{g}(x) \quad\mbox{if \ } x\in(0,\sigma_2(t)],
         \\
           g(x)-k\,x+\dfrac{x-\sigma_2(t)}{x-\sigma_2(t)+1}
           &\quad\mbox{if \ } x>\sigma_2(t)
        \end{cases}
\end{align*}
and consider a corresponding auxiliary problem
\begin{equation}\label{Ptilde}
     v''=\wt{f}(t,v),\,\, v(0)=v(2\pi),\,\, v'(0)=v'(2\pi).
\end{equation}
It is easy to verify that \ $\wt{f}(t,x)\ge m(t)$ for a.e. $t\in\J$ and all $x\in\R,$
while  $A_0$ is a constant upper function and $(\sigma_1-\nu,\sigma'_1)$ \ are lower
functions of \eqref{Ptilde}. Hence, the auxiliary problem \eqref{Ptilde} has a solution
$v$ \ such that \ $v(t_v)\in [A_0,\sigma_1(t_v)-\nu]$ \ for some \ $t_v\in\J.$ Since
$u(t)\in [\sigma_1(t),\sigma_2(t)]$ for all $t\in\J,$ it is $v\not\equiv u.$

It remains to show that \ $v(t)\in [\eps^*,\sigma_2(t))$ \ for all $t\in\J,$
which would mean that $v$ is a solution to our problem \eqref{P}, as well.

\skiphalfaline

\noindent{\sc Step 3.} \quad Assume that \
$\displaystyle\max_{t\in\J}(v(t)-\sigma_2(t))=(v(\tau)-\sigma_2(\tau))>0$ \ for
some \ $\tau\in(0,2\pi)$. Then \
$v'(\tau)\,{-}\,\rho_2(\tau-)\,{=}\,0\,{=}\,v'(\tau)\,{-}\,\rho_2(\tau+)$ and
there is a $\delta\,{>}\,0$ such that $v(t)\,{>}\,\sigma_2(t)$ on $[\tau,\tau+\delta].$
\\[3mm]
Thus,\vskip-9mm
\begin{align*}
   v''(t)&=e(t)-k\,\sigma_2(t)+g(\sigma_2(t))
   +\dfrac{v(t){-}\sigma_2(t)}{v(t){-}\sigma_2(t)+1}
  \\[0mm]
   &>e(t)-k\,\sigma_2(t)+g(\sigma_2(t))\ge \rho'_2(t)
    \quad \mbox{a.e.\ on\ } [\tau,\tau{+}\delta].
\end{align*}\vskip-2mm\noindent
Hence,\vskip-6mm
\[
   v'(t)-\rho_2(t)=\int_{\tau}^t(v''(s)-\rho'_2(s))\,\dd s>0
   \quad\mbox{\ for \ } t\in(\tau,\tau+\delta)
\]\vskip-3mm\noindent
and\vskip-8mm
\[
  v(\tau+\delta)-\sigma_2(\tau+\delta)
 =v(\tau)-\sigma_2(\tau)+\int_\tau^{\tau+\delta}(v'(s)-\rho_2(s))\,\dd s
 >v(\tau)-\sigma_2(\tau).
 \]
As this is a contradiction, it follows that \ $v(t)\le\sigma_2(t)$ \ on \ $\J.$
In particular,\vskip-3mm
\[
    \max_{t\in\J}v(t)=v(t_1)\in[A_0,R].
\]
Now, assume that \ $\min_{t\in\J}v(t)=v(t_0)<\eps^*.$ \ Due to the periodicity of $v,$
\ we have \vskip-2mm
\[
   v'(t_0)=v'(t_1)=0 \quad\mbox{and}\quad
   \int_{v(t_0)}^{v(t_1)}\wt{g}(x)\,\dd x+\int_{t_0}^{t_1}e(t)\,v'(t)\,\dd t=0.
\]
Indeed,
\begin{align*}
   \int_{v(t_0)}^{v(t_1)}\wt{g}(x)\,\dd x+\int_{t_0}^{t_1}e(t)\,v'(t)\,\dd t
  &=\int_{t_0}^{t_1}\wt{g}(v(t))\,v'(t)\,\dd t+\int_{t_0}^{t_1}e(t)\,v'(t)\,\dd t
 \\
  &=\int_{t_0}^{t_1}v''(t)\,v'(t)\,\dd t=\int_{v'(t_0)}^{v'(t_1)} y\,\dd y=0.
\end{align*}
Furthermore, by \eqref{eps*}
\begin{align*}
      K^*<&\int_{\eps^*}^{A_0}(g(x)-k\,\xi)\,\dd x
      \le\int_{v(t_0)}^{v(t_1)}(\wt{g}(x)\,d x
        -\int_{A_0}^{v(t_1)}(g(x)-k\,x)\,\dd x
    \\
     =&-\int_{t_0}^{t_1}e(t)\,v'(t)\,dt\,{-}\int_{A_0}^{v(t_1)}(g(x)-k\,x)\,\dd x
     \le\|e\|_1\,\|m\|_1+\int_{A_0}^{\|\sigma_2\|_\infty}\hskip-1mm|g(x)-k\,x|\,\dd x=K^*.
\end{align*}
This being a contradiction, we conclude that it must be \ $v(t)\ge\eps^*$ for all $t\in\J.$
As we already know that \ $v(t)\le\sigma_2(t)$ on $t\in\J,$ this completes the proof.

\section{Alternative results}
\begin{theorem}
Assume: \ $g\in C(0,\infty),$ \ $k\in\R,$ \ $e\in L[0,2\pi]$ \ and
\begin{align*}
   &\bullet\quad\infess_{t\in\J}\,e(t)>-\pruh{e},
  \\[1mm]
   &\bullet\quad\lim_{x\to 0+}\,\int_x^1g(s)\,\dd s=+\infty,
  \\
   &\bullet\quad g(x)-k\,x<-\pruh{e} \mbox{ \ for\ } x\,{\in}\,[A,B],
    \mbox{\ where \ } A>0 \mbox{\ and \ }
    B\,{-}\,A\,{\ge}\,\frac{2\pi}{6}\,\|e{-}\overline{e}\|_1,
  \\[1mm]
   &\bullet\quad\eqref{P} \mbox{ \ has upper functions \ } (\sigma_2,\rho_2) \mbox{ \ such that \ }
   \sigma_2>B \mbox{ \ on \ }\J.
\end{align*}
Then \ \eqref{P} \ has at least 2 positive solutions.
\end{theorem}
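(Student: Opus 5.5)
This alternative theorem differs from Theorem~\ref{Main} in two hypotheses. Condition \eqref{1} together with \eqref{3} is replaced by the single hypothesis $\infess_{t\in\J} e(t)>-\pruh{e}$; since $\pruh e$ is finite, this implies \eqref{1}. The constant $\frac{2\pi}{6}=\frac{\pi}{3}$ is the same as in \eqref{4}. So the plan is to rerun the three steps of the proof of Theorem~\ref{Main}. Wherever \eqref{3} was used, I will use the new hypothesis on $e$ combined with \eqref{2}.

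\emph{Step 1} goes through verbatim. It uses only \eqref{4}, \eqref{5}, the Sobolev estimate \eqref{sob} and Theorem~\ref{LUFTh}. We construct $\sigma_0$ and $\sigma_1=A+\frac{\pi}{6}\|e-\pruh e\|_1+\sigma_0$, and check $\sigma_1(t)\in[A,B]$, so that $(\sigma_1-\nu,\sigma_1')$ are lower functions for $\nu\in[0,\nu_0]$. The lower bound $m$ needed in Theorem~\ref{LUFTh} will be produced by the same truncation as in Step~2. This yields the first solution $u$ with $\sigma_1\le u\le\sigma_2$.

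\emph{Step 2}, the construction of the constant upper function $A_0\in(0,A-\nu_0)$, is where \eqref{3} entered, and it is the step needing care. We need $g(A_0)-kA_0+\infess e\ge0$. By \eqref{2}, $\int_x^1 g\to+\infty$, so $g$ cannot be bounded above near $0$: if $g\le C$ on $(0,1]$, the integral would stay bounded. Hence there are points $x_n\to0+$ with $g(x_n)\to+\infty$, while $kx_n\to0$ and $\infess e>-\infty$. Choosing $A_0=x_n$ for $n$ large then gives the required inequality. Thus \eqref{3} is not really needed here, and the hypothesis on $e$ only guarantees finiteness of $\infess e$.

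The real use of \eqref{3} in the original proof was to make $m(t)=e(t)+\min\{0,\inf_{(0,R]}(g(x)-kx)\}$ integrable. Without \eqref{3}, the infimum over $(0,\eps^*)$ may be $-\infty$. I would circumvent this by choosing $\eps^*$ first and taking the infimum only over $[\eps^*,R]$; the modified $\wt g$ is linear on $(0,\eps^*)$, so $\wt f\ge m$ still holds. The difficulty is circularity: $K^*$ depends on $m$, hence on $\eps^*$, while \eqref{eps*} requires $\eps^*$ small relative to $K^*$. I will break this by bounding the term $\int_{t_0}^{t_1}e\,v'$ differently, namely by $\|e\|_1\|v'\|_\infty$, with $\|v'\|_\infty$ bounded by a quantity independent of $\eps^*$. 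The idea is to integrate $v''=\wt f(t,v)$ over a period: then $\int_0^{2\pi}\wt g(v)=-2\pi\pruh e$. Since $\wt g(v)\ge -kv\ge -|k|R$ wherever $v\ge\eps^*$ and $\wt g$ is positive near $0$ (if $g(\eps^*)>k\eps^*$), the positive part of $\wt g(v)$ is controlled. Hence $\|v''\|_1$, and so $\|v'\|_\infty$, is bounded by $2(\|e\|_1+2\pi|k|R+2\pi|\pruh e|)$, and the dependence on $\eps^*$ disappears.

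\emph{Step 3} is unchanged. The bound $v\le\sigma_2$ is shown exactly as before. Then the energy identity between the minimum point $t_0$ and the maximum point $t_1$, together with \eqref{eps*} for the new $\eps^*$-independent constant $K^*$, rules out $\min v<\eps^*$. So $v$ is a second solution of \eqref{P}, distinct from $u$ because $v(t_v)<\sigma_1(t_v)$.
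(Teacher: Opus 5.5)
\paragraph{There is a genuine gap: the $\eps^*$-independent bound on $\|v'\|_\infty$.}
Your Steps 1 and 2 are sound. You also locate the role of \eqref{3} correctly: it makes $m$, and hence the bound $\|v'\|_\infty\le\|m\|_1$ and the constant $K^*$, independent of $\eps^*$. And \eqref{2} alone does give $\limsup_{x\to0+}g(x)=+\infty$, so the constant upper function $A_0$ survives.

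The gap is in how you break the resulting circularity. The inequality $\wt g(v)\ge -kv$ ``wherever $v\ge\eps^*$'' is false. There $\wt g(v)=g(v)-kv$, and once \eqref{3} is dropped, $\inf_{[\eps^*,R]}g$ may tend to $-\infty$ as $\eps^*\to0+$ arbitrarily fast. For instance, let $\beta\ge0$ be a sum of disjoint triangular spikes centred at $2^{-n}$, of height $\exp(2^n)$ and area $\frac12$, and put $g(x)=x^{-2}-\beta(x)$. This $g$ still satisfies \eqref{2}, but $\inf_{[\eps,1]}g\le 4^n-\exp(2^n)$ as soon as $\eps\le 2^{-n}$.

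The period identity $\int_0^{2\pi}\wt g(v(t))\,\dd t=-2\pi\pruh e$ does not help either. It only ties $\int_0^{2\pi}\wt g(v)^+$ to $\int_0^{2\pi}\wt g(v)^-$, so it bounds $\|\wt g(v)\|_1$ only if one of the two is bounded a priori. Positivity of $\wt g$ on $(0,\eps^*)$ bounds neither. An $\eps^*$-uniform lower bound for $g(x)-kx$ on $[\eps^*,R]$ is precisely what \eqref{3} supplied.

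\paragraph{Consequences.}
Your estimate of $\|v''\|_1$ is therefore unproved, and $K^*$ still contains $\|e\|_1\|m\|_1\ge\|e\|_1\bigl(2\pi|\inf_{[\eps^*,R]}(g(x)-kx)|-\|e\|_1\bigr)$. Note that $\|e\|_1>0$, because the hypothesis $\infess_{t\in\J}e(t)>-\pruh e$ forces $\pruh e>0$. In the example, \eqref{eps*} then fails for every small $\eps^*$: its left-hand side is at most of order $1/\eps^*$, while $K^*$ is exponentially large in $1/\eps^*$.

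Replacing $\|m\|_1$ by an energy estimate based on $\frac12 v'(t)^2+\int_{v(t)}^1\wt g(s)\,\dd s$ does not help by itself. The bound it yields depends on the depth of the wells of $x\mapsto\int_x^1(g(s)-ks)\,\dd s$ on $[\eps^*,R]$, and \eqref{2} does not control those.

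A telling symptom is that you use $\infess_{t\in\J}e(t)>-\pruh e$ only through $\infess_{t\in\J}e(t)>-\infty$. If your argument were right, it would show that \eqref{3} can simply be deleted from Theorem~\ref{Main}. The paper states this theorem without proof, so there is nothing to compare line by line. Presumably, though, the strengthened hypothesis on $e$ is what replaces \eqref{3}. It has to be used exactly at this point, to obtain an $\eps^*$-independent bound for $\int_{t_0}^{t_1}e(t)\,v'(t)\,\dd t$ (or for $\|v'\|_\infty$), and your proposal does not supply one.
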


\skipaline

\noindent
For the next assertion, define
\[
   \Phi(k)=
   \left\{\begin{array}{ll}
       \min\left\{\dfrac\pi6,\dfrac{\coth(\sqrt{|k|})\,\pi}{4\sqrt{|k|}}\right\}
      &\quad\mbox{if}\,\,k<0,
   \\[6mm]
      \min\left\{\dfrac\pi{6\,(1-k)},\dfrac 1{4\,k}\sin(\sqrt{k}\,\pi)\right\}
     &\quad\mbox{if}\,\,k\in (0,\frac 14],
   \\[5mm]
      \min\left\{\dfrac{\pi}{6\,(1-k)},\dfrac 1{2\sqrt{k}\sin(\sqrt{k}\,\pi)}\right\}
     &\quad\mbox{if}\,\,k\in (\frac 14,1),
   \\[4mm]
     \dfrac 1{2\,\sqrt{k}\,|\sin(\sqrt{k}\,\pi)|}
    &\quad\mbox{if}\,\,k>1,\, k\ne n^2,\, n\in\N.
  \end{array}\right.
\]

\begin{theorem}
Assume: \ $g\in C(0,\infty),$ \ $k\in\R,$ \ $e\in L[0,2\pi],$ \ $k\ne n^2$ \ for all
\ $n\in\N$ \ and
\begin{align*}
   &\bullet\quad \infess_{t\in\J}\,e(t)>-\infty,
  \\
   &\bullet\quad \lim_{x\to 0+}\,\int_x^1g(s)\,ds=+\infty,\quad
                 \liminf_{x\to 0+}\,g(x)>-\infty,
  \\
   &\bullet\quad g(x)-k\,\dfrac{A+B}2<-\pruh{e} \mbox{\ for\ } x\in[A,B],
                 \mbox{\ where \ } B-A\ge 2\,\Phi(k)\,\|e-\pruh{e}\|_1,
  \\
  &\bullet\quad \eqref{P} \mbox{\ has upper functions \ }(\sigma_2,\rho_2)
                \mbox{\ such that\ }\sigma_2>B \mbox{\ on \ } \J.
\end{align*}

\noindent Then: \ \eqref{P} \ has at least 2 positive solutions.
\end{theorem}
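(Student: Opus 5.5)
The plan is to follow the three-step scheme of the proof of Theorem \ref{Main}. Only Step 1, the construction of a lower function with values in $[A,B]$, has to be changed. Steps 2 and 3 (the second solution below $\sigma_1$, obtained by truncation and the energy estimate) will be reused essentially verbatim, since they only need: \eqref{1}, \eqref{2}, \eqref{3}, an upper function $\sigma_2>B$, and a lower function $\sigma_1$ with $\sigma_1\ge A$ that stays lower after a small downward shift.

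For the new Step 1 I would write $\sigma_1=\frac{A+B}2+w$, where $w$ is the $2\pi$-periodic solution of
\[
   w''+k\,w=e(t)-\pruh{e}, \qquad w(0)=w(2\pi),\ w'(0)=w'(2\pi).
\]
This solution exists and is unique because $k\ne n^2$ (for $k=0$ take the zero-mean solution, as $\sigma_0$ in Step 1). Then
\[
   \sigma_1''+k\,\sigma_1=e(t)-\pruh{e}+k\,\frac{A+B}2 .
\]
The key claim is the estimate $\|w\|_\infty\le\Phi(k)\,\|e-\pruh{e}\|_1$. Granting it, the hypothesis $B-A\ge 2\Phi(k)\|e-\pruh{e}\|_1$ gives $\sigma_1(t)\in[A,B]$. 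The hypothesis $g(x)-k\frac{A+B}2<-\pruh{e}$ on $[A,B]$ then yields
\[
   \sigma_1''+k\,\sigma_1=e-\pruh{e}+k\,\frac{A+B}2>e+g(\sigma_1)\quad\mbox{a.e.}
\]
So $(\sigma_1,\sigma_1')$ is a lower function of \eqref{P}, and by strictness and continuity of $g$ on the compact set $[A,B]$, so is $(\sigma_1-\nu,\sigma_1')$ for small $\nu$. One caveat: the inequality with $k\,\sigma_1$ replaced by $k(A+B)/2$ is not literally the lower-function inequality. I would handle this by putting the term $k\,w$ on the correct side, i.e. the relevant quantity is $g(\sigma_1)-k\sigma_1+k w$. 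Theorem \ref{LUFTh} then gives the first solution in $[\sigma_1,\sigma_2]$.

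The estimate on $w$ is done through the Green's function $G_k$ of the periodic problem $w''+kw=h$. Since $\int_0^{2\pi}h=0$ for $h=e-\pruh{e}$, we have
\[
   \|w\|_\infty\le \tfrac12\bigl(\max G_k-\min G_k\bigr)\|h\|_1 ,
\]
because one may subtract the constant $\frac12(\max G_k+\min G_k)$ from $G_k$ without changing $w$. The explicit formulas do the rest. For $k<0$, $G_k(t,s)=-\cosh(\sqrt{|k|}(|t-s|-\pi))/(2\sqrt{|k|}\sinh(\sqrt{|k|}\pi))$. For $k>0$, $k\ne n^2$, $G_k(t,s)=\cos(\sqrt k(|t-s|-\pi))/(2\sqrt k\sin(\sqrt k\pi))$. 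Computing the oscillation of these gives the second entries in the definition of $\Phi(k)$, and for $k>1$ the bound $1/(2\sqrt k|\sin\sqrt k\pi|)$. The competing entries $\pi/6$ and $\pi/(6(1-k))$ come instead from the Sobolev argument of Step 1 applied to $w''=h-kw$: the energy estimate gives $(1-k)\|w'\|_2^2\le\|h\|_1\|w\|_\infty$, and then \eqref{sob} applies with the factor $1/(1-k)$. For $k<0$ the term $k\|w\|_2^2$ has a favourable sign.

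I expect the main obstacle to be this Green's function bookkeeping: verifying that each entry of $\Phi(k)$ is indeed an upper bound in its range of $k$, particularly the $k\in(0,\frac14]$ case with its $\sin$ in the numerator. I also need to reconcile the ``$k\frac{A+B}2$'' form of the hypothesis with the lower-function inequality; I would first try absorbing $k\,w$ into the midpoint choice as described above. Once Step 1 is in place, Steps 2 and 3 of the proof of Theorem \ref{Main} go through unchanged and give the second positive solution $v\not\equiv u$.
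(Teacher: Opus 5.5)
Your proposal is correct and is the natural adaptation of the proof of Theorem~\ref{Main}, which is what the paper implicitly relies on (it states this theorem without proof): only Step~1 changes, via $\sigma_1=\frac{A+B}{2}+w$ with $w''+kw=e-\pruh{e}$, and Steps~2--3 carry over verbatim. Two minor corrections: your ``caveat'' is unfounded, because $\sigma_1''+k\sigma_1=e-\pruh{e}+k\frac{A+B}{2}$ makes the hypothesis on $[A,B]$ literally the lower-function inequality; and for $k<1$ the half-oscillation of $G_k$ is $\tanh(\sqrt{|k|}\,\pi/2)/(4\sqrt{|k|})$ (for $k<0$) resp.\ $\tan(\sqrt{k}\,\pi/2)/(4\sqrt{k})$ (for $0<k<1$), not the second entries of $\Phi(k)$ --- but these never exceed $\Phi(k)$, and for $k>1$ your formula coincides with $\Phi(k)$, so $\|w\|_\infty\le\Phi(k)\|e-\pruh{e}\|_1$ follows directly.
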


The statements above entail a somewhat implicit requirement for the existence of "sufficiently large"
upper functions. Apart from obvious cases leading to the existence of constant upper functions with
the required properties, there are also not so trivial ways to ensure the existence of the needed
upper functions for problem \eqref{P}; see, e.g., \cite{RT3} or \cite{RT4}.

{\small

References to other related works can be found in the bibliographies of the cited papers,
particularly in \cite{RTV1} and \cite{RTV2}.

\end{document}